\numberwithin{equation}{section}
\theoremstyle{plain}
\newtheorem{teo}{Theorem}[section]
\newtheorem{cor}{Corollary}[section]
\theoremstyle{definition}
\newtheorem{defi}{Definition}[section]
\theoremstyle{remark}
\newtheorem*{oss}{Remark}
	\def\d{\sqrt{2}}
	\def\R{\mathbb{R}}
	\def\H{\mathscr{H}}
\newcommand{\fracPart}[1]{[\!|#1|\!]_1}
\begin{document}

\begin{frontmatter}
\title{A canonical form for Gaussian periodic processes}
\runtitle{Gaussian periodic processes}

\begin{aug}
\author{\fnms{Giacomo} \snm{Aletti}
\ead[label=e1]{giacomo.aletti@unimi.it}}
\and
\author{\fnms{Matteo} \snm{Ruffini}
\ead[label=e2]{matteo.ruffini87@gmail.com}  \corref{}}

\runauthor{G.~Aletti and M.~Ruffini}

\affiliation{Universit\`a degli Studi di Milano\thanksmark{m1}
ToolsGroup\thanksmark{m2}}

\address{ADAMSS Center and Dept. of Mathematics\\
Universit\`a degli Studi di Milano\\
\printead{e1}}

\address{ToolGroup and Dept. of Mathematics\\
Universit\`a degli Studi di Milano\\
\printead*{e2}}

\end{aug}

\begin{abstract}
This article provides a representation theorem for a set of Gaussian processes; this theorem allows to build Gaussian processes with arbitrary regularity
and to write them as limit of random trigonometric series.
We show via Karhunen-Loève theorem that this set is isometrically equivalent to $ \ell^2 $. 
We then prove that regularity of trajectory path of anyone of such processes
can be detected just by looking at decrease rate of $ \ell^2 $  sequence associated to him via isometry. 
\end{abstract}

\begin{keyword}[class=AMS]
\kwd[Primary ]{60G15}
\kwd{60G10}
\kwd[; secondary ]{62M20}
\end{keyword}

\begin{keyword}
\kwd{gaussian processes}
\kwd{Karhunen-Lo\`eve's theorem}
\kwd{periodic processes}
\kwd{stationary processes}
\end{keyword}

\end{frontmatter}

\section{Introduction}
The aim of this article is to provide a simple and general method for constructing continuous and periodic Gaussian
processes of arbitrary regularity.
Periodic real processes arise as natural tool in analyzing continuous processes on the circle
(e.g., in image analysis, when processing noises of closed lines). More recently,
they found a great development in the theory of Random Fields on the sphere (see
\cite{Mar-Pec} and the reference therein).

The Brownian Bridge $B_t$ is a well known example of a continuous and periodic Gaussian 
process on $[0,1]$. Although it is not stationary, it becomes stationary if we remove the path-integral
$\int_0^1 B_s ds$, see \cite{Darling}. 
If we model a noise with such a process, 
its supremum may 
be used to test the null hypothesis. Moreover,
this supremum is the limiting distribution of an optimal test statistic for the uniformity of the 
distribution on a circle, \cite{Darling}; in Section~\ref{BB}, we show that this process is strongly related to one
 generated by $B_t$ by randomly choosing the starting point on $[0,1]$.
To generalize this example, we propose an approach which is linked to
Karhunen-Lo\`eve's expansion that gives uncorrelated coefficient.
This expansion is optimal in regression functional studies, as shown in \cite{Cohen}.
In fact, periodic processes are easily decomposed
with Fourier basis, and stationarity will cause the coefficients of the $\sin$ and $\cos$
of the same frequency to be equal. Thus, we underline a natural isometry between this
representation of Gaussian processes and $\ell^2$. 
The asymptotic decay of the Karhunen-Lo\`eve's coefficients will be related to 
the regularity of the paths, as a consequence of Fourier analysis. 
Thus, it will be possible to define a
``periodic fractional Brownian motion'' by choosing
an appropriate asymptotic decay (see also \cite{Adler,Flandrin}) of the coefficients.

For what concerns notations, $s,t,\ldots$ relates to time variables, and will often belong
to $[0,1]$. We denote by $\{x_t\}_{t\in[0,1]},
\{y_t\}_{t\in[0,1]}, \ldots$ stochastic adapted process
defined on a given filtered space $(\Omega, \mathcal{F}, \{\mathcal{F}_t\}_{t\in[0,1]}, \mathbb{P})$,
while $ (X_n)_n,(Y_n)_n,(Z_n)_n,\ldots$ are sequences of random variables. $C(s,t)$ is a 
positive semidefinite function (it will be the correlation function of a stochastic process).
When a process is stationary, its covariance function will often be replaced by 
the associated covariogram function $\tilde{C}(t)$. The sequence $(e_k(t))_{k\in\mathbb{N}}$
denotes a sequence of orthogonal function on $L^2([0,1])$. 
Finally, we denote by $\fracPart{t}$ the fractional part sawtooth function of the real number $t$, 
which is defined by the formula
$\fracPart{t} = t - \operatorname{floor}(t)$.

\section{A canonical form for Gaussian periodical processes}
The first example of signal theory usage in the description of stochastic processes can be found in
\cite{Karhunen}, where is exposed a theorem that allows to represent Gaussian processes as limits of stochastic Fourier series. The classical 
statement of Karhunen-Loève's theorem is the following, as described in  \cite{ash}.
\begin{teo}[Karhunen-Loève]\label{teo:KL}
Let $\{x_t\}_{t\in [a,b]} $,  $a,b<\infty$ , such that
$
E[x_t] = 0,$ $\forall t\in [a,b],
$
and
$
Cov(x_t,x_s) = C(t,s),
$
continuous in both variables. Then 
$$
x_t = \sum_{k=1}^{\infty}Z_ke_k(t),\qquad a\leq t\leq b,
$$
where $e_k$ are the eigenfunction of following integral operator from  $L^2[a,b]$ in itself
$$
f\in L^{2}[a,b]\to g(t) = \int_{a}^{b}{C(t,\tau)f(\tau)d\tau},\,\,\,\,\,\,a\leq t\leq b,
$$
and $e_k$ form an orthonormal bases for the space spanned by eigenfunctions corresponding to nonzero eigenvalues. The $Z_k$ are given by
$$
Z_k = \int_{a}^{b}x_te_k(t)dt
$$
and are orthogonal random variables ($  E(Z_kZ_j) = 0$ for $k\neq j$), with zero mean and variance  $\lambda_k^2$, where $\lambda_k^2$ is
the eigenvalue corresponding to $e_k$.
The series $ \sum_{k=1}^{\infty}Z_ke_k(t) $ converges in mean square to $x_t$, uniformly in $t$,
that is
$$
E\Big((x_t - \sum_{k=1}^{\infty}Z_ke_k(t))^2\Big) \mathop{\longrightarrow}_{n\to\infty}0
$$
uniformly for $t\in[a,b]$.
Moreover if $x_t$ is Gaussian, the $Z_k$ in expansion are real independent Gaussian random variables.
\end{teo}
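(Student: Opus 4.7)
My plan is to reduce everything to Mercer's theorem. The first step is to recognize that the integral operator $T f(t) := \int_a^b C(t,\tau) f(\tau)\,d\tau$ on $L^2[a,b]$ is compact (its kernel is continuous on a compact square, hence Hilbert--Schmidt), self-adjoint (by symmetry of $C$), and positive semidefinite (because $C$ is a covariance, so $\langle Tf,f\rangle = \mathrm{Var}\bigl(\int f(t) x_t\,dt\bigr)\ge 0$). The spectral theorem then provides an orthonormal basis $(e_k)$ of eigenfunctions of the closure of its range with nonnegative eigenvalues $\lambda_k^2$, and Mercer's theorem upgrades this to the uniformly and absolutely convergent kernel expansion
$$C(t,s) = \sum_{k\ge 1}\lambda_k^2\, e_k(t)\, e_k(s).$$

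Next I would define $Z_k := \int_a^b x_t\, e_k(t)\,dt$ as an $L^2(\Omega)$-valued integral; existence and the Fubini-type exchanges with expectation are justified by continuity of $C$ and boundedness of each $e_k$ on the compact interval. A short calculation using the eigenfunction equation $T e_k = \lambda_k^2 e_k$ then yields $E[Z_k]=0$, the covariance identity
$$E[Z_j Z_k] = \int_a^b\!\!\int_a^b C(t,s)\, e_j(t)\, e_k(s)\,dt\,ds = \lambda_k^2\,\delta_{jk},$$
and the useful cross-covariance $E[x_t Z_k] = \int_a^b C(t,s)\, e_k(s)\,ds = \lambda_k^2 e_k(t)$.

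The heart of the argument is the uniform mean-square convergence. Setting $S_n(t) := \sum_{k=1}^n Z_k e_k(t)$, expanding the square and substituting the two identities above collapses the remainder to
$$E\bigl[(x_t - S_n(t))^2\bigr] = C(t,t) - \sum_{k=1}^n \lambda_k^2\, e_k(t)^2,$$
which is exactly the diagonal remainder of Mercer's expansion and therefore tends to $0$ uniformly in $t\in[a,b]$. For the Gaussian addendum, each $Z_k$ is an $L^2$-limit of Riemann sums that are linear combinations of the jointly Gaussian values $\{x_t\}$, hence Gaussian; every finite subvector $(Z_1,\ldots,Z_n)$ is jointly Gaussian with diagonal covariance, so orthogonality promotes to independence.

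The step I expect to be the main obstacle is the uniformity of the diagonal convergence. Pointwise convergence at each fixed $t$ follows almost formally from spectral theory and Bessel's inequality, but the uniform version genuinely requires Mercer's theorem, whose standard proof combines positivity of the partial-sum remainder on the diagonal with a Dini-type argument. Once Mercer is in hand, the remaining verifications are routine bookkeeping on second moments.
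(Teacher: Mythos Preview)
The paper does not actually prove this theorem: it is stated as the classical Karhunen--Lo\`eve theorem with attribution ``as described in \cite{ash}'' and is used as a black box in the subsequent results. So there is no in-paper proof to compare against.

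That said, your sketch is correct and is exactly the standard argument one finds in Ash and elsewhere: compactness and positivity of the integral operator, Mercer's expansion $C(t,s)=\sum_k \lambda_k^2 e_k(t)e_k(s)$, the covariance computations $E[Z_jZ_k]=\lambda_k^2\delta_{jk}$ and $E[x_tZ_k]=\lambda_k^2 e_k(t)$, and the collapse of the mean-square remainder to the diagonal Mercer tail $C(t,t)-\sum_{k\le n}\lambda_k^2 e_k(t)^2$. Your identification of the uniform diagonal convergence as the substantive step, and of Mercer's theorem (via a Dini argument) as what supplies it, is on the mark. The Gaussian addendum via $L^2$-limits of linear combinations is also the right route. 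Nothing is missing.
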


In this paper we provide a result that allows to build Gaussian processes of arbitrary regularity, proceeding on
the way tracked by Adler (see \cite{Adler}).
The result will be based on Karhunen-Loève's decomposition theorem, and will deal with following set of processes.
\begin{defi}
$ \H$ is the set of real Gaussian stochastic processes $ \{x_t\}_{t\in[0,1]}$ such that they are continuously stationary (so if $ C(s,t) $ is the covariance
function then there exist a real continuous function $ \tilde{C}(s-t) = C(s,t) \,\, \forall s,t,\,\R$), periodical (i.e. $ x_{0} = x_{1} , a.s.$)  with 
$E(x_t) = 0,\,\forall t\in\R  $. 
\end{defi}
The set $ \H$  is a Banach space, when it is equipped with the inner product given by
$$
(\{x_t\}_{t\in[0,1]},\{y_t\}_{t\in[0,1]}) = \int_{0}^{1}{E(x_ty_t)dt}\in\R_+.
$$
\begin{oss}\label{rem:1.1}
 We remark that if  $ \{x_t\}_{t\in[0,1]}\in\H $  and if $ \tilde{C}(s-t) = C(s,t)  $ is its covariogram function, then $ \tilde{C}(t_0)  =  \tilde{C}(t_0+1)$.
\end{oss}

We are going to specialize Karhunen-Loève's decomposition theorem to $\H$, showing that a process is in $\H$ if and only if it can be written as 
limit of a canonical trigonometric random series. From this result we will  show that $\H$ 
may be seen as a Hilbert space, isometrically equivalent to the space of the coefficients $ \ell^2 $; 
via this isometry it will be easy to create Gaussian stationary processes with arbitrary regularity, 
by looking at decreasing speed rate of canonical series 
coefficients. Moreover, this result allows also to detect information about regularity of a process, since it relates it with the regularity of its covariogram function.

\begin{teo}\label{teo:2.1}
 Let $ \{x_t\}_{t\in[0,1]} \in \H $ with covariance $C(s,t) = \tilde{C}(t-s)$; then in mean square, uniformly in $t$, 
$$
x_t = c_0 Y'_0 + \sum_{k=1}^{\infty}c_k(Y_k\sqrt{2}\sin(2k\pi t) + Y'_k\sqrt{2}\cos(2k\pi t))
$$ 
where
$
(Y_n)_n,(Y'_n)_n
$ are two independent sequence of independent standard Gaussian variables,
and
$
(c_k)_k\in \ell^2
$
is such that
$$
c_n^2 = \int_{0}^{1}{\tilde{C}(s)\cos(2n\pi s)ds},\qquad n=0,1,2,\ldots
$$
\end{teo}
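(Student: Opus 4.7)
The plan is to specialise Theorem~\ref{teo:KL} to $\H$; this reduces everything to diagonalising the covariance integral operator $T[f](t)=\int_0^1 \tilde C(t-\tau)f(\tau)\,d\tau$ on $L^2[0,1]$. Because $\tilde C$ is continuous and, by Remark~\ref{rem:1.1}, $1$-periodic, $T$ is a convolution operator on the circle $\R/\mathbb{Z}$, so the natural candidate eigenbasis is the real trigonometric system $\{1,\sqrt{2}\sin(2k\pi t),\sqrt{2}\cos(2k\pi t)\}_{k\ge 1}$, which is already orthonormal and complete in $L^2[0,1]$.

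The core computation is to verify the eigenvalue equations directly. Substituting $u=t-\tau$ in $T[\cos(2k\pi\cdot)](t)$ produces an integral over $[t-1,t]$ of the $1$-periodic function $\tilde C(u)\cos(2k\pi(t-u))$, so its range can be pulled back to $[0,1]$, yielding
\[
T[\cos(2k\pi\cdot)](t)=\cos(2k\pi t)\int_0^1 \tilde C(u)\cos(2k\pi u)\,du+\sin(2k\pi t)\int_0^1 \tilde C(u)\sin(2k\pi u)\,du.
\]
Symmetry of $C$ forces $\tilde C$ to be even, so the second integral vanishes (its integrand is odd on $[-1/2,1/2]$, where it may be translated by periodicity) and the first is $c_k^2$, exactly as in the statement. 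The same computation gives $T[\sin(2k\pi\cdot)]=c_k^2\sin(2k\pi\cdot)$ and $T[1]=c_0^2$. Completeness of the trigonometric basis then ensures that these exhaust the eigenfunctions (indices with $c_k=0$ contribute zero to the KL series and can be harmlessly retained in the stated formula). A trace-class argument, using that the trace of $T$ equals $\int_0^1\tilde C(0)\,dt=\tilde C(0)<\infty$, gives $c_0^2+2\sum_{k\ge 1}c_k^2<\infty$, hence $(c_k)_k\in\ell^2$.

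Plugging the eigendecomposition into Theorem~\ref{teo:KL} yields an expansion of $x_t$ with independent centred Gaussian coefficients of variances $c_k^2$, independence being granted by the Gaussian hypothesis on $\{x_t\}$. Writing each such coefficient as $c_k$ times a standard Gaussian ($Y_k$, $Y'_k$, or $Y'_0$) and grouping the sine and cosine contributions of equal frequency recovers the displayed series, with mean-square convergence uniform in $t$ inherited from Theorem~\ref{teo:KL}. I expect the only delicate point to be the periodic change of variables in the diagonalisation step: one must simultaneously use the $1$-periodicity of $\tilde C$ and of the trigonometric factor to shift the interval back to $[0,1]$ without boundary corrections. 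Once this is justified, the rest of the argument is bookkeeping on top of the classical Karhunen-Loève theorem.
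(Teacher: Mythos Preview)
Your proposal is correct and follows essentially the same route as the paper: both diagonalise the covariance integral operator on $L^2[0,1]$ by checking that the real trigonometric system are eigenfunctions (using the $1$-periodicity of $\tilde C$ from Remark~\ref{rem:1.1}), and then feed this into the Karhunen--Lo\`eve theorem. The only cosmetic differences are that the paper invokes Mercer's theorem to obtain $(a_n)_n\in\ell^1$ (hence $(c_n)_n\in\ell^2$) and the uniform expansion of $C$, whereas you use a trace argument and completeness of the trigonometric basis; you also make explicit the evenness of $\tilde C$ to kill the sine integral, a step the paper leaves implicit.
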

\proof
By Mercer Theorem (see, e.g., \cite{ash}) we know that if $ (e_n)_n $ is an orthonormal bases for the space spanned by the 
eigenfunctions corresponding to nonzero eigenvalues of integral operator  
$$
x\in L^{2}[0,1]\to y(t) = \int_{0}^{1}{C(t,\tau)x(\tau)d\tau},\qquad a\leq t\leq b,
$$
then, uniformly, absolutely and in $ L^2[0,1]\times [0,1] $,
$
C(s,t) = \sum_{k=0}^{\infty}{e_k(t)e_k(s)\lambda_k}
$ ,
where $\lambda_k$ is the eigenvalue corresponding to $e_k$.
By Remark~\ref{rem:1.1} we are going to
see that $ (\cos(2n\pi s),\sin(2n\pi s))_n $ are 
eigenfunctions relative to operator  whose kernel is $ C(s,t) $.
In fact, let 
$
a_n= \int_{0}^{1}{\tilde{C}(s)\cos(2n\pi s)ds}
$,
then
$$
\int_{0}^{1}{\cos(2n\pi t)\tilde{C}(t-\tau)dt} =  a_n\cos(2n\pi \tau),
$$
the same relation holding when $\cos$ is replaced by $\sin$.
It follows from Mercer Theorem that  
$$
C(s,t)=a_0+ \sum_{k=1}^{\infty}{2a_k\cos(2k\pi(s- t))}
$$ 
uniformly, absolutely and in $ L^2[0,1]\times [0,1] $ and that $ (a_n)_n\in\ell^1 $,
and hence the sequence $ (c_n)_n $ formed by $ c_n = \sqrt{|a_n|} $ lays in $ \ell^2 $.
From Theorem~\ref{teo:KL} and Theorem~\ref{teo:2.1} we deduce the existence  
of two independent sequence of independent standard Gaussian variables
$
(Y_n)_n,(Y'_n)_n
$ 
such that
in mean square, uniformly in  $ t $
$$
x_t = c_0Y'_0+\sum_{k=1}^{\infty}c_k(Y_k\d\sin(2k\pi t) + Y'_k\d\cos(2k\pi t)).
$$ 
\endproof
\begin{teo}\label{teo:2.2}
Let $
(Y_n)_n,(Y'_n)_n
$ be two independent sequence of independent standard Gaussian variables,
and $(c_k)_k\in \ell^2 $.
Then the sequence
$$
y^{(n)}_t = c_0Y'_0+\sum_{k=1}^{n}c_k(Y_k\d\sin(2k\pi t) + Y'_k\d\cos(2k\pi t))
$$ 
converges in mean square, uniformly in $t$ to $\{y_t\}_{t\in[0,1]}\in\H$.
Moreover if $C(s,t)$ is the $y_t$ covariance function, 
then uniformly, absolutely and in $ L^2[0,1]\times [0,1] $,
$$
C(s,t) = c_0^2+\sum_{k=1}^{\infty}{2c_k^2\cos(2k\pi(s- t))}.
$$

\end{teo}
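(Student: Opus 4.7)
The plan is to build $y_t$ as the $L^2(\Omega)$-limit of the partial sums $y_t^{(n)}$ and then verify the defining properties of $\H$ (Gaussian, centered, continuously stationary, periodic), reading off the covariance from the same limiting procedure.

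First I would establish uniform $L^2$ convergence of $(y_t^{(n)})_n$. By independence of the two Gaussian families and the identity $\sin^2+\cos^2\equiv 1$, a direct computation gives, for $m>n$,
$$E\bigl[(y_t^{(m)}-y_t^{(n)})^2\bigr] \;=\; 2\sum_{k=n+1}^{m} c_k^2,$$
which tends to zero uniformly in $t$ because $(c_k)\in\ell^2$. Completeness of $L^2(\Omega)$ produces, for each $t$, a limit $y_t$, with convergence uniform in $t$. Periodicity is then immediate: since $\sin(2k\pi\cdot 0)=\sin(2k\pi\cdot 1)=0$ and $\cos(2k\pi\cdot 0)=\cos(2k\pi\cdot 1)=1$, we have $y_0^{(n)}\equiv y_1^{(n)}$ pointwise, so $y_0=y_1$ almost surely. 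Each $y_t^{(n)}$ is a centered Gaussian variable and, more generally, every finite-dimensional marginal $(y_{t_1}^{(n)},\ldots,y_{t_r}^{(n)})$ is jointly Gaussian with mean zero; since $L^2$-limits of centered Gaussian vectors remain centered Gaussian (characteristic functions converge), the process $\{y_t\}$ is centered Gaussian.

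For the covariance, expanding and applying the cosine addition formula yields
$$E\bigl[y_t^{(n)}y_s^{(n)}\bigr] \;=\; c_0^2 + \sum_{k=1}^{n} 2c_k^2\cos\bigl(2k\pi(s-t)\bigr),$$
and Cauchy--Schwarz together with the uniform $L^2$-convergence of $y_t^{(n)}$ lets me pass to the limit on the left. On the right, the estimate $|2c_k^2\cos(2k\pi(s-t))|\leq 2c_k^2$ and the Weierstrass M-test give absolute and uniform convergence on $[0,1]^2$, whence $L^2([0,1]^2)$ convergence and continuity of the limit are automatic. Thus $C(s,t)$ depends only on $s-t$ through a continuous covariogram $\tilde C$, placing $\{y_t\}$ in $\H$. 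The only real care point is the interchange of limit and expectation required to pass from $E[y_t^{(n)}y_s^{(n)}]$ to $E[y_t y_s]$, handled uniformly in $(s,t)$ by Cauchy--Schwarz; everything else is bookkeeping against the $\ell^2$ summability of $(c_k)$.
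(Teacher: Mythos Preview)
Your argument is correct and follows the same route as the paper's own proof: establish the uniform Cauchy bound $E[(y_t^{(m)}-y_t^{(n)})^2]=2\sum_{k=n+1}^{m}c_k^2$, then read off the mean, covariance, Gaussianity and periodicity of the limit. If anything, you are more explicit than the paper, which simply asserts periodicity and the covariance identity ``as a calculation'' without spelling out the Cauchy--Schwarz/Weierstrass justifications you supply.
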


\proof
First of all we notice that Gaussian process $ y^{(n)}_t $ converges to a periodical 
$ \{y_t\}_{t\in[0,1]} $ in mean square uniformly in $t$, because 
$$
\sup_{t\in[0,1]}E[| y^{(n)}_t - y^{(m)}_t |^{2}] =2 \sum_{k=n}^{m}c_k^2\underbrace{\to}_{m,n} 0.
$$
Let's look at $  \{y_t\}_{t\in[0,1]} $ properties: it is a calculation to show that $ E[y_t] = 0 $ for all $ t $, and 
that 
$$
Cov(y_t,y_s) = c_0^2+2\sum_{k=1}^{\infty}{c_k^2\cos(2k\pi (s-t))},
$$
which is a continuous function, and that
$$
E[y_t^2] = c_0^2+2\sum_{k=1}^{\infty}c_k^2 = 2\|(c_n)_n\|^2-c_0^2.
$$
 Moreover $\{y_t\}_{t\in[0,1]} $ is a Gaussian process, because the 
two sequences $(Y_n)_n$ and$,(Y'_n)_n$ are Gaussians.

\endproof
\begin{cor}\label{cor:isometry}
 Let us consider a couple $ Z=((\bar{Y}_n)_n,(\bar{Y'}_n)_n) $ of independent sequence of
independent standard Gaussian variables. For each $  \{z_t\}_{t\in[0,1]}\in \H $, 
there exists an $  \{x_t\}_{t\in[0,1]}\in \H_{Z}  $ having the same law, where
\begin{multline*}
\H_{Z} = \Big\{ \{x_t\}_{t\in[0,1]} \in \H : \\
x_t =a_0\bar{Y'}_0+\d\sum_{k=1}^{\infty}a_k(\bar{Y}_k\sin(2k\pi t) + \bar{Y'}_k\cos(2k\pi t))
, (a_n)_n\in \ell^2  \Big\}
\end{multline*}
and the limit is in mean square and uniformly in $t$.
\end{cor}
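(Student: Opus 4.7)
The plan is to chain Theorem~\ref{teo:2.1} and Theorem~\ref{teo:2.2} together with the standard fact that a centered Gaussian process is determined in law by its covariance.

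First, given $\{z_t\}_{t\in[0,1]}\in\H$ with covariance $C(s,t)=\tilde{C}(t-s)$, I would apply Theorem~\ref{teo:2.1} to extract the canonical coefficients
\[
c_n^2 = \int_0^1 \tilde{C}(s)\cos(2n\pi s)\,ds,\qquad n\geq 0,
\]
which the theorem guarantees belong to $\ell^2$. These coefficients are intrinsic to $z_t$: they depend only on $\tilde{C}$, not on any particular realisation of the Karhunen-Loève Gaussian sequences associated to $z_t$.

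Next, using the fixed external sequences $(\bar{Y}_n)_n,(\bar{Y'}_n)_n$ of the statement, I would define
\[
x_t \;=\; c_0\bar{Y'}_0 + \d\sum_{k=1}^{\infty}c_k\bigl(\bar{Y}_k\sin(2k\pi t)+\bar{Y'}_k\cos(2k\pi t)\bigr).
\]
By Theorem~\ref{teo:2.2}, this series converges in mean square, uniformly in $t$, to a process in $\H$, so by construction $\{x_t\}_{t\in[0,1]}\in\H_Z$ with the choice $a_n=c_n$. The same theorem computes the covariance of $x_t$ as $c_0^2+\sum_{k\geq 1}2c_k^2\cos(2k\pi(s-t))$, which by the Mercer-type expansion established inside the proof of Theorem~\ref{teo:2.1} coincides with $C(s,t)=\tilde{C}(t-s)$.

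It then remains only to conclude that $x_t$ and $z_t$ have the same law. Both are centered Gaussian processes on $[0,1]$ with identical covariance function, so their finite-dimensional marginals are identical centered Gaussian vectors; by Kolmogorov's theorem this identifies their laws. I do not expect a real obstacle here: the only subtle point is making sure the $(c_n)_n$ obtained from Theorem~\ref{teo:2.1} are indeed determined by $\tilde{C}$ alone (which they are, via the explicit integral formula), so that reusing them against an independently chosen Gaussian pair $Z$ produces a well-defined element of $\H_Z$ with the required law.
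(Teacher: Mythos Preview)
Your argument is correct and is exactly the intended one: the paper states this corollary without proof, as an immediate consequence of Theorem~\ref{teo:2.1} and Theorem~\ref{teo:2.2}, and your chaining of those two results together with the fact that centered Gaussian processes are determined in law by their covariance is precisely how the corollary follows.
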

\subsubsection*{Geometry of  $ \H_{Z} $: isometry with $\ell^2$}
For $(a_n)_n\in\ell^2$, define
$$
x_t =a_0\bar{Y'}_0 + \d\sum_{k=1}^{\infty}a_k(\bar{Y}_k\sin(2k\pi t) + \bar{Y'}_k\cos(2k\pi t)). 
$$
From Theorem~\ref{teo:2.1} and Theorem~\ref{teo:2.2} it follows that
$\|x_t\|_{\H} = \sqrt{a_0^2+2\sum_n a_n^2}$, and hence it is naturally defined an isometry
between the representative space $\H_Z$ and $\ell^2$.

\subsubsection*{Regularity of the paths}
We have seen that to each stochastic process in $\H$
can be associated a sequence in $ \ell^2 $. We are now showing how are related the decrease rate of such sequence
with the regularity of the process trajectory path. We first recall a classic regularity theorem.
\begin{teo}[see \cite{Revuz}]\label{teo:2.3}
Let $ \{x_t\}_{t\in[0,1]} $ a real stochastic process such that there exist three positive constants $ \gamma $, $c$ and $ \epsilon $ so that 
$$
E\Big(|x_t-x_s|^{\gamma}\Big) \leq c|t-s|^{1+\epsilon};
$$
so there exists a modification $ \{\tilde{x}_{t}\}_{t\in[0,1]}  $ of $\{x_t\}_{t\in[0,1]}$, such that
$$
E((\sup_{s\neq t}{\frac{|\tilde{x}_t-\tilde{x}_s|}{|t-s|^{\alpha}}})^{\gamma})<\infty
$$
for all $ \alpha\in [0,\frac{\epsilon}{\gamma}) $; in particular the trajectories of $ \{\tilde{x}_{t}\}_{t\in[0,1]}  $
are Holder continuous of order  $ \alpha $.
\end{teo}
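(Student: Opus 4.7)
The statement is the classical Kolmogorov–Chentsov continuity criterion, and my plan is to follow the standard dyadic chaining argument, being careful to track $L^\gamma$ norms (not just almost sure bounds) so that the quantitative moment estimate on the supremum emerges.

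First, I would restrict attention to the dyadic rationals $D = \bigcup_n D_n$, where $D_n = \{k 2^{-n} : 0 \le k \le 2^n\}$. Applying Markov's inequality to the hypothesis yields, for consecutive points $t_k = k 2^{-n}$ and any $\alpha < \epsilon/\gamma$,
\begin{equation*}
\mathbb{P}\bigl(|x_{t_{k+1}} - x_{t_k}| \ge 2^{-\alpha n}\bigr) \le c \, 2^{-n(1+\epsilon - \alpha\gamma)}.
\end{equation*}
Summing over $k = 0,\dots,2^n-1$ and then over $n$ gives a convergent series because $1 + \epsilon - \alpha\gamma > 1$, so Borel–Cantelli produces an event of probability one on which, for some random $N(\omega)$, every $n \ge N$ satisfies $\max_k |x_{t_{k+1}} - x_{t_k}| < 2^{-\alpha n}$.

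Next I would run the chaining step. For any two dyadic points $s < t$ in $D$ with $2^{-n-1} < t-s \le 2^{-n}$, decompose the path from $s$ to $t$ into a telescoping sum of at most $2$ level-$n$ increments, plus at most two increments at every finer level $m > n$. Summing geometrically gives a deterministic (up to the random constant from Borel–Cantelli) estimate $|x_t - x_s| \le K(\omega) |t-s|^{\alpha}$ for all $s,t \in D$. Because $D$ is dense and the above bound guarantees uniform continuity on $D$, I define $\tilde{x}_t = \lim_{D \ni s \to t} x_s$ for every $t \in [0,1]$; the moment bound $E(|x_t - x_s|^\gamma) \le c |t-s|^{1+\epsilon}$ implies $x_s \to x_t$ in $L^\gamma$ as $s \to t$, so $\tilde{x}_t = x_t$ almost surely, i.e., $\tilde{x}$ is a modification, and its paths are Hölder-$\alpha$ by construction.

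The main obstacle, and the step I would be most careful with, is upgrading the almost-sure Hölder bound to the $L^\gamma$ estimate $E(\sup_{s \ne t} |\tilde{x}_t - \tilde{x}_s|^\gamma / |t-s|^{\alpha\gamma}) < \infty$. Rather than invoke Borel–Cantelli (which only gives finiteness almost surely), I would control moments directly: set $M_n = \max_{0 \le k < 2^n} |x_{t_{k+1}} - x_{t_k}|$ and bound
\begin{equation*}
E(M_n^\gamma) \le \sum_{k=0}^{2^n - 1} E\bigl(|x_{t_{k+1}} - x_{t_k}|^\gamma\bigr) \le c \, 2^n \cdot 2^{-n(1+\epsilon)} = c \, 2^{-n\epsilon}.
\end{equation*}
The chaining estimate gives $\sup_{s\ne t \in D}|x_t-x_s|/|t-s|^\alpha \le C \sum_{n \ge 0} 2^{\alpha n} M_n$, and by Minkowski's inequality its $L^\gamma$ norm is bounded by $C \sum_n 2^{\alpha n} \|M_n\|_\gamma \le C \sum_n 2^{-n(\epsilon/\gamma - \alpha)} < \infty$ precisely because $\alpha < \epsilon/\gamma$. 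This yields the desired moment inequality for $\tilde{x}$ after passing to the continuous modification, completing the proof.
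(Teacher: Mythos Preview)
The paper does not supply its own proof of this theorem; it is quoted as a classical result with a reference to Revuz--Yor. Your proposal reproduces the standard dyadic chaining argument found in that reference and is correct, including the refinement of bounding $\|M_n\|_\gamma$ directly and summing so as to obtain the $L^\gamma$ control on the H\"older constant (not merely almost-sure finiteness), which is precisely what the stated conclusion demands.

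One small caveat: your appeal to Minkowski's inequality tacitly assumes $\gamma\ge 1$. The theorem as stated allows any positive $\gamma$; for $0<\gamma<1$ you would instead use the elementary subadditivity $(\sum_n a_n)^\gamma \le \sum_n a_n^\gamma$ for nonnegative reals, which yields $E\bigl[(\sum_n 2^{\alpha n}M_n)^\gamma\bigr]\le \sum_n 2^{\alpha\gamma n}E[M_n^\gamma]\le c\sum_n 2^{-n(\epsilon-\alpha\gamma)}<\infty$ directly. With that adjustment the argument covers the full range of parameters.
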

 
It is simple to apply this last theorem to processes staying in $ \H $.
It is well known that if $ Y\approx N(0,\sigma^2) $, then 
$
E(|Y|^p) = \sigma^p \frac{2^{\frac{p}{2}}\Gamma\big(\frac{p+1}{2}\big)}{\sqrt{\pi}} ,
$ (see, e.g., \cite{normale}).
From this fact we deduce the following result.
\begin{teo}\label{teo:2.4}
Assume that $ \{x_t\}_{t\in [0,1]}\in\H $ and 
let $C(s,t) = \tilde{C}(s-t)$ be its covariance function. If 
$\tilde{C}$ is Holder continuous of order $ \alpha $, then 
almost all trajectories of  $ \{x_t\}_{t\in[0,1]} $ are Holder continuous of order $ \beta < \frac{\alpha}{2} $.
\end{teo}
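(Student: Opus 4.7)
The plan is to apply Theorem~\ref{teo:2.3} to suitable moments of the Gaussian increments $x_t-x_s$, exploiting the fact that for a stationary process the $L^2$ norm of the increment is controlled directly by the covariogram.

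First I would observe that since $\{x_t\}\in\H$ is centered and stationary,
$$
E[(x_t-x_s)^2] = 2\tilde{C}(0) - 2\tilde{C}(t-s).
$$
By the Hölder assumption on $\tilde{C}$ of order $\alpha$ (applied at the origin, using $\tilde{C}(0)=\tilde{C}(0)$ and the bound $|\tilde{C}(0)-\tilde{C}(h)|\le K|h|^\alpha$ which follows from Hölder continuity on the torus, recalling Remark~\ref{rem:1.1}), one obtains
$$
E[(x_t-x_s)^2] \le 2K|t-s|^\alpha.
$$
So the standard deviation of the Gaussian variable $x_t-x_s$ is at most $\sqrt{2K}\,|t-s|^{\alpha/2}$.

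Next, since $x_t-x_s$ is centered Gaussian, I would invoke the moment formula recalled just before the theorem statement: for any even integer $p$ (or in fact any real $p\ge 1$),
$$
E\bigl(|x_t-x_s|^p\bigr) = \sigma_{t,s}^p\,\frac{2^{p/2}\Gamma\!\bigl(\tfrac{p+1}{2}\bigr)}{\sqrt{\pi}},
\qquad \sigma_{t,s}^2 = E[(x_t-x_s)^2].
$$
Combining the two displays yields a bound of the shape
$$
E\bigl(|x_t-x_s|^p\bigr) \le c_p\,|t-s|^{\alpha p/2}.
$$

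The final step is to fit this into the hypothesis of Theorem~\ref{teo:2.3}: for fixed $\beta<\alpha/2$, I choose $p$ large enough that $\alpha p/2 > 1$ and $\beta < (\alpha p/2 - 1)/p = \alpha/2 - 1/p$; then setting $\gamma=p$ and $\epsilon=\alpha p/2 - 1>0$ gives a continuous modification that is Hölder of every order strictly less than $\epsilon/\gamma = \alpha/2-1/p$. Letting $p\to\infty$ along a sequence, and intersecting the corresponding full-measure events, I obtain a modification whose trajectories are almost surely Hölder of every order $\beta<\alpha/2$, as required.

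No step looks like a genuine obstacle: the only mildly delicate point is the passage $p\to\infty$, where one must be careful to extract a single modification that works simultaneously for all $\beta<\alpha/2$. This is handled by taking a countable sequence $\beta_n\uparrow\alpha/2$ and noting that the modification produced by Theorem~\ref{teo:2.3} is independent of $p$ (any two continuous modifications of the same process coincide almost surely), so the intersection of full-measure events remains of full measure.
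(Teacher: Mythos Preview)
Your proof is correct and follows essentially the same route as the paper: bound $E[(x_t-x_s)^2]$ by $2(\tilde C(0)-\tilde C(t-s))\le M|t-s|^\alpha$, use Gaussianity to lift this to $p$-th moments, apply Theorem~\ref{teo:2.3}, and let $p\to\infty$. Your added remark about extracting a single modification valid for all $\beta<\alpha/2$ is a point the paper leaves implicit.
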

\proof
Since
$$
E(|x_{t+h} - x_t|^2) = E(x_t^2 + x_{t+h}^2 - 2x_{t+h}x_{t})  = 2(\tilde{C}(0) - \tilde{C}(h)) \leq M|h|^{\alpha},
$$
we deduce that
$$
E(|x_{t+h} - x_t|^{2p}) = 2^pC_p(\tilde{C}(0) - \tilde{C}(h))^p \leq \tilde{M}|h|^{p\alpha}
$$
so, by Theorem~\ref{teo:2.3}, if $ p\alpha = 1+\epsilon $, for all $p>\frac{1}{\alpha}$, almost all trajectories of
$ \{x_t\}_{t\in[0,1]} $ are Holder continuous of order $\beta $, with $\beta < \frac{p\alpha-1}{2p}$;
but this is true for each $ p > \frac{1}{\alpha} $, we conclude that almost all trajectory path of $ \{x_t\}_{t\in[0,1]} $ are Holder continuous 
of order $\beta < \frac{\alpha}{2}$. 
\endproof
A very useful result for our analysis will be the following one, whose proof may be found in
 \cite{Boas}.
\begin{teo}[Boas' Theorem]
Let $f\in L^1[0,1]$ be a function whose Fourier expansion has only nonnegative cosine terms, 
and let $ (a_n)_n $ be the sequence of its cosine coefficient. Then
$$
f\text{is Holder continuous of order }\alpha \Longleftrightarrow  a_k  = 
O\Big(\frac{1}{k^{\alpha+1}}\Big).
$$
\end{teo}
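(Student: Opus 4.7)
The plan is to prove the two directions separately, using in both cases the identity
$$\cos(2\pi n(x+h)) - \cos(2\pi n x) = -2\sin(\pi n(2x+h))\sin(\pi n h),$$
which yields $|\cos(2\pi n(x+h)) - \cos(2\pi n x)| \leq \min(2,\,2\pi n|h|)$, and, for the $(\Rightarrow)$ direction, the nonnegativity hypothesis on $(a_n)$ through the pointwise-nonnegative identity
$$f(0) - f(h) = \sum_{n \geq 1} 2a_n\, \sin^{2}(\pi n h).$$

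For $(\Leftarrow)$, assuming $a_n = O(n^{-(\alpha+1)})$, I would bound $|f(x+h) - f(x)|$ termwise and split the series at the threshold $N = \lfloor 1/|h| \rfloor$. On the low-frequency block $n \leq N$ one uses $|\cos(2\pi n(x+h)) - \cos(2\pi n x)| \leq 2\pi n|h|$, giving a contribution bounded by $2\pi|h|\sum_{n\leq N} n\, a_n = O(|h|\cdot N^{1-\alpha}) = O(|h|^{\alpha})$. On the tail $n > N$ the trivial bound $\min(2,\dots) \leq 2$ together with $a_n = O(n^{-(\alpha+1)})$ gives $\sum_{n>N} a_n = O(N^{-\alpha}) = O(|h|^{\alpha})$. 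Summing the two halves yields the Hölder estimate uniformly in $x$. This direction is essentially a clean dyadic splitting.

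For $(\Rightarrow)$, the nonnegativity of the $a_n$ is what makes the implication possible: since every summand in the displayed identity is nonnegative, the left-hand side dominates any partial sum on the right. The strategy is to choose a test value $h_k$ (or a test integral against a nonnegative kernel supported near the origin) for which $\sin^{2}(\pi n h)$ is uniformly bounded below on a block of indices comparable to $k$; by the Hölder hypothesis $f(0) - f(h_k) = O(h_k^{\alpha})$, so the block average of the $a_n$ of size $\sim k$ is $O(h_k^{\alpha})$. Calibrating $h_k \sim 1/k$ and exploiting that the block contains $\Theta(k)$ indices gives the extra factor of $1/k$ that upgrades the trivial pointwise bound $a_k = O(k^{-\alpha})$ to the desired $a_k = O(k^{-(\alpha+1)})$.

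The main obstacle is precisely this last upgrade: the isolated pointwise inequality $a_k \leq f(0) - f(1/(4k))$ only yields $O(k^{-\alpha})$, so getting the sharper $O(k^{-(\alpha+1)})$ requires a genuinely averaged/blockwise argument that distributes the Hölder bound across a window of coefficients of size proportional to $k$, effectively an integration-by-parts or summation-by-parts on the dyadic scale. Once this blockwise lower bound on $f(0) - f(h)$ is established, the conclusion follows by a standard choice of $h$, and the remainder of both directions reduces to routine Fourier-analytic estimates.
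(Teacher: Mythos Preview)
The paper does not prove this theorem at all; it only states it and defers to the Lorentz reference for the argument, so there is no in-paper proof to compare your sketch against.

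On the substance of your proposal: the $(\Leftarrow)$ direction is correct and is the standard dyadic splitting. The $(\Rightarrow)$ direction, however, has a genuine gap that your final paragraph flags but does not close. From the nonnegativity identity $f(0)-f(h)=\sum_{n\ge 1}2a_n\sin^2(\pi n h)$ and the choice $h\sim 1/k$ you can legitimately extract the block estimate
\[
\sum_{k/2\le n\le k} a_n \;=\; O(k^{-\alpha}),
\]
but without a further hypothesis this does \emph{not} imply the pointwise bound $a_k=O(k^{-(\alpha+1)})$. A lacunary example shows why: take $a_{2^j}=2^{-j\alpha}$ and $a_n=0$ otherwise, with $0<\alpha<1$. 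The resulting cosine series is H\"older of order $\alpha$ by exactly the splitting argument you gave for $(\Leftarrow)$, yet at $k=2^j$ one has $a_k=k^{-\alpha}$, which is not $O(k^{-(\alpha+1)})$. So the $(\Rightarrow)$ implication is actually false under the hypotheses as the paper states them; the classical Boas--Lorentz result carries the additional assumption that $(a_n)$ is nonincreasing, and it is precisely monotonicity that converts the block estimate into a pointwise one (via $a_k\le \tfrac{2}{k}\sum_{k/2\le n\le k}a_n$). Your ``distribute the H\"older bound across a window'' heuristic is exactly the step that needs this missing hypothesis to be made rigorous, and no amount of summation by parts will manufacture the extra factor of $1/k$ in its absence.
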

Boas' Theorem may be used in connection with 
Theorem~\ref{teo:2.1} and Theorem~\ref{teo:2.2} to deduce 
more regularity properties of the processes in $ \H $.
In fact, take $(c_n)_n$ as in Theorem~\ref{teo:2.1} and Theorem~\ref{teo:2.2}.
From Boas' Theorem we have that 
if
$
k^2c_k^2 = O(\frac{1}{k^{1+\alpha}})
$
for $ 0<\alpha\leq 1 $, then $  \partial^2 \tilde{C}  $ is Holder continuous of order $ \alpha $. 
This link between the regularity of $\tilde{C}$ and the paths of $ \{x_t\}_{t\in[0,1]} $
is underlined in the following theorem.
\begin{teo}
With the notations of Theorem~\ref{teo:2.2},
if $c_k^2 = O(\frac{1}{k^{3+\alpha}})$, then
almost all trajectories of  $ \{x_t\}_{t\in[0,1]} $ are Lipshitz continuous, and, as function of $t$, $ \{x_t\}_{t\in[0,1]} $ have a continuous derivative
 $ \{x'_t\}_{t\in[0,1]}$ Holder continuous of order $ \beta < \frac{\alpha}{2} $.
\end{teo}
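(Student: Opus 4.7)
The strategy is to apply the preceding results of the paper to the formal termwise derivative of the canonical series of $\{x_t\}$. Concretely, define
$$
x'_t := \d\sum_{k=1}^{\infty} c_k(2k\pi)\bigl(Y_k\cos(2k\pi t) - Y'_k\sin(2k\pi t)\bigr).
$$
I first verify that $\{x'_t\}_{t\in[0,1]}\in\H$. The hypothesis $c_k^2 = O(1/k^{3+\alpha})$ immediately gives $\sum_k k^2 c_k^2 < \infty$, hence the coefficient sequence $(2k\pi c_k)_k$ lies in $\ell^2$. Theorem~\ref{teo:2.2} applied to this sequence (with the roles of $\sin$ and $\cos$ swapped, which is irrelevant for its hypotheses and conclusions) then ensures $\{x'_t\}\in\H$, with covariogram
$$
\tilde{D}(h) = \sum_{k=1}^{\infty} 2(2k\pi)^2 c_k^2 \cos(2k\pi h),
$$
that is $\tilde{D} = -\tilde{C}''$ where the second derivative of $\tilde{C}$ is the one justified, term by term, exactly by the summability $\sum k^2 c_k^2 < \infty$.

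Next, I invoke the Boas-based observation already made in the paragraph immediately preceding the statement: under $c_k^2 = O(1/k^{3+\alpha})$ we have $k^2 c_k^2 = O(1/k^{1+\alpha})$, so $\partial^2\tilde{C}$, and therefore $\tilde{D}$, is H\"older continuous of order $\alpha$. Applying Theorem~\ref{teo:2.4} to $\{x'_t\}$ produces a modification whose trajectories are H\"older continuous of every order $\beta<\alpha/2$.

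It remains to identify $x'_t$ with the pathwise derivative of $x_t$; Lipschitz continuity of the paths of $\{x_t\}$ then follows at once, since a continuous function on $[0,1]$ with a continuous (hence bounded) derivative is Lipschitz. To make the identification, I would look at the partial sums
$$
x^{(n)}_t = c_0Y'_0 + \d\sum_{k=1}^{n} c_k\bigl(Y_k\sin(2k\pi t)+Y'_k\cos(2k\pi t)\bigr)
$$
and their termwise derivatives $(x^{(n)})'_t$, and show that $(x^{(n)})'_t\to x'_t$ \emph{uniformly in $t$ and almost surely}. From this, together with the a.s. uniform convergence of $x^{(n)}_t$ to $x_t$ (which can be upgraded from the $L^2$ statement of Theorem~\ref{teo:2.2} by the same argument), one concludes by a standard elementary calculus fact that the limit $x_t$ is $C^1$ with derivative $x'_t$.

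The main obstacle is this \emph{almost sure} uniform convergence of the derivative series, which is stronger than the mean-square uniform convergence furnished by Theorem~\ref{teo:2.2}. I expect to handle it by a Gaussian maximal/concentration inequality applied to the trigonometric polynomial tails $\d\sum_{k>n} c_k(2k\pi)(Y_k\cos(2k\pi t) - Y'_k\sin(2k\pi t))$: the hypothesis $k^{2+\alpha}c_k^2 = O(1)$ leaves enough margin beyond the $\ell^2$ threshold to obtain summability of the suprema via a Kolmogorov-type chaining or via Theorem~\ref{teo:2.3} applied directly to the tails, yielding a.s. uniform convergence and closing the argument.
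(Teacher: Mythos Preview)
Your overall architecture matches the paper's: define the candidate derivative process via the termwise-differentiated series, check it lies in $\H$ via Theorem~\ref{teo:2.2}, use Boas' theorem to see its covariogram is H\"older of order $\alpha$, and then apply Theorem~\ref{teo:2.4}. The divergence is entirely in the identification step, and there the paper's route is noticeably cheaper than yours.

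You aim to prove that the partial-sum derivatives converge to $x'_t$ \emph{almost surely and uniformly in $t$}, and you correctly flag this as the main obstacle, proposing Gaussian maximal inequalities or chaining. That can be made to work, but it imports machinery not otherwise used in the paper and is left only as a sketch. The paper avoids this entirely by passing through the \emph{integral} representation: for each $n$ one has the deterministic identity
\[
y^{(n)}_t = y^{(n)}_0 + \int_0^t \tilde{y}^{(n)}_\tau\,d\tau,
\]
and then one only needs the mean-square convergence already supplied by Theorem~\ref{teo:2.2} (for both $y^{(n)}\to x$ and $\int_0^t\tilde{y}^{(n)}\to\int_0^t\tilde{x}$) to conclude, via the triangle inequality in $L^2$, that $x_t = x_0 + \int_0^t \tilde{x}_\tau\,d\tau$ a.s. Since $\tilde{x}$ has H\"older-continuous paths by Theorem~\ref{teo:2.4}, this integral formula immediately gives $x\in C^1$ with derivative $\tilde{x}$. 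In short: integrate first and differentiate afterwards, so that only $L^2$ limits are needed; your approach differentiates first and is then forced to upgrade to a.s.\ uniform convergence.
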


\proof
It is clear that 
$$
\partial^2 \tilde{C}(\delta) = 2\partial^2 \sum_{k=1}^{\infty}{c^2_k\cos(2k\pi(\delta))} = -2\sum_{k=1}^{\infty}{(2\pi)^2k^2c^2_k\cos(2k\pi(\delta))}
$$
and that $ \partial^2 \tilde{C} $ is Holder continuous of
order $ \alpha $, for some $ 0<\alpha\leq1 $.
Moreover we have that uniformly in $t$ and in mean square
$$
x_t = c_0Y'_0 + \d\sum_{k=1}^{\infty}c_k(Y_k\sin(2k\pi t) + Y'_k\cos(2k\pi t)).
$$
and, from Theorem~\ref{teo:2.1}, there also exist a stochastic process in $ \H $ such that uniformly in $t$ and in mean square
$$
\tilde{x}_t = 2\d\pi\sum_{k=1}^{\infty}kc_k(Y_k\cos(2k\pi t) - Y'_k\sin(2k\pi t)),
$$
which has covariogram function Holder continuous of order $ \alpha $ given by
$$
\tilde{\bar C}(\delta) =  2\sum_{k=1}^{\infty}{(2\pi)^2k^2c^2_k\cos(2k\pi(\delta))}.
$$
If we define
$$
\begin{aligned}
y^{(n)}_t &:= c_0Y'_0 +  \d\sum_{k=1}^{n}c_k(Y_k\sin(2k\pi t) + Y'_k\cos(2k\pi t))
\\
\tilde{y}^{(n)}(t) &:= 2\d\pi\sum_{k=1}^{n}kc_k(Y_k\cos(2k\pi t) - Y'_k\sin(2k\pi t)),
\end{aligned}
$$
than $
y^{(n)}_t = {y}^{(n)}_0 + \int_{0}^{t}\tilde{y}^{(n)}_{\tau}d\tau
$,
a.s. for any $n$, while
for each fixed $t$, in mean square we have
$
  \int_{0}^{t}\tilde{y}^{(n)}_{\tau}d\tau \to \int_{0}^{t}\tilde{x}_{\tau}d\tau.
$
Since
\begin{multline*}
\sqrt{E\Big((x_t - x_0 -  \int_{0}^{t}\tilde{x}_{\tau}d\tau)^2\Big)}  
\leq \sqrt{E\Big((x_t - y^{(n)}_t)^2\Big)} 
\\
+ \sqrt{E\Big((y^{(n)}_0 
+ \int_{0}^{t}\tilde{y}^{(n)}_{\tau}d\tau- x_0 - \int_{0}^{t}\tilde{x}_{\tau}d\tau)^2\Big)}
\mathop{\longrightarrow}_{n\to\infty} 0, 
\end{multline*}
it follows that a.s. 
$
x_t = x_0 + \int_{0}^{t}\tilde{x}_{\tau}d\tau .
$
By Theorem~\ref{teo:2.4} we know that almost all trajectory path of $ \tilde{x}_t$ are Holder continuous of order  $ \beta < \frac{\alpha}{2} $,
and thesis follows.
\endproof
A natural generalization of this result is the following:
\begin{cor}
If, in previous notation, $c_k^2 = O(\frac{1}{k^{1+2m+\alpha}})$ 
 then almost all trajectory path of  $ \{\partial^kx_t\}_{t\in[0,1]} $, with $ k<m $, are Lipschitz continuous,  and admit, as  function of $t$, 
a continuous derivative $ \{\partial^mx_t\}_{t\in[0,1]}$ Holder continuous of order $\beta $, for all $ \beta < \frac{\alpha}{2} $.
\end{cor}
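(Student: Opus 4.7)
The plan is to proceed by induction on $m$, invoking the previous theorem at every step. The base case $m=1$ is exactly the content of the preceding theorem: the requirement on paths $\partial^k x_t$ with $k<1$ reduces to asking that $x_t$ itself be Lipschitz, which is automatic once the continuous derivative $x'_t$, bounded on $[0,1]$, exists.

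For the inductive step, assume the corollary holds at level $m-1$ and suppose $c_k^2 = O(1/k^{1+2m+\alpha})$. For any $m\geq 1$ this implies in particular $c_k^2 = O(1/k^{3+\alpha'})$ for some $\alpha'\in(0,1]$, so the preceding theorem applies and yields $x'_t = \tilde{x}_t$ almost surely, where
$$
\tilde{x}_t \;=\; 2\sqrt{2}\pi\sum_{k=1}^\infty k c_k\bigl(Y_k\cos(2k\pi t) - Y'_k\sin(2k\pi t)\bigr)
$$
is the formal termwise derivative and belongs to $\H$. Its canonical coefficients are $(2\pi k c_k)_k$, whose squares satisfy $(2\pi)^2 k^2 c_k^2 = O(1/k^{1+2(m-1)+\alpha})$. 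Invoking the induction hypothesis at level $m-1$ applied to $\tilde{x}_t$ gives that almost every path $\partial^j\tilde{x}_t$ is Lipschitz for $j<m-1$ and that $\partial^{m-1}\tilde{x}_t$ exists, is continuous, and is H\"older continuous of order $\beta<\alpha/2$.

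Translating back via $\tilde{x}_t=\partial x_t$ yields the claim: $\partial^j x_t=\partial^{j-1}\tilde{x}_t$ is Lipschitz for $1\leq j<m$, the top derivative $\partial^m x_t=\partial^{m-1}\tilde{x}_t$ is continuous and H\"older of order $\beta<\alpha/2$, and $x_t$ itself is Lipschitz because it admits a bounded continuous derivative on $[0,1]$.

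The main obstacle is keeping the identification between the formal termwise derivative and the genuine pathwise derivative in force at every level. This is controlled by the a.s.\ integral identity $x_t = x_0 + \int_0^t \tilde{x}_\tau d\tau$ extracted from the proof of the preceding theorem, which is invoked once per inductive step to pass a derivative from the series to the trajectory and thereby justifies applying the hypothesis to $\tilde{x}_t$ rather than merely to its formal coefficients.
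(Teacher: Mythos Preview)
Your induction is correct and is precisely the argument the paper leaves implicit: the corollary is stated without proof, merely flagged as ``a natural generalization'' of the preceding theorem, and iterating that theorem by passing to the derived process $\tilde{x}_t$ with coefficients $2\pi k c_k$ is exactly the intended route. The one cosmetic point worth noting is that $\tilde{x}_t$ is expressed with $Y_k\cos(2k\pi t)-Y'_k\sin(2k\pi t)$ rather than the canonical $\sin/\cos$ ordering, but relabeling $(\tilde Y_k,\tilde Y'_k)=(-Y'_k,Y_k)$ restores the form required by Theorem~\ref{teo:2.2}, so the induction hypothesis applies without change.
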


\subsection{The centered Brownian bridge}\label{BB}
Take a Brownian bridge $ \{x_t\}_{t\in[0,1]} $.
This process is Gaussian, periodic but not stationary, since $x_0=x_1 \equiv 0$.
If we randomize the starting point of the process, by shifting the $t$-axis of 
a $[0,1]$-uniform random variable $U$, we obtain the process
\begin{equation}\label{eq:def_sBB}
\hat{x}_t = x_{\fracPart{t-U}}
\end{equation} 
which is expected to belong to $\H$.
A process with the law of $\{\hat{x}_t\}_{t\in[0,1]}$ is called \emph{centered Brownian
bridge}.
Let us recall that 
a Brownian bridge may also be represented as
\[
x_t = \d\sum_{k=1}^\infty W_k \frac{\sin(k \pi t)}{k \pi}.
\]
One may expect that the periodic process
\[
y_t = aY_0 +\d\sum_{k=1}^\infty \frac{b}{k \pi}(Y_k\sin(2k\pi t) + Y'_k\cos(2k\pi t)),
\]
which shares the same asymptotic behavior of the coefficients of $\{x_t\}_{t\in[0,1]}$ is
closely related to $\{\hat{x}_t\}_{t\in[0,1]}$. The next theorem shows this facts.
\begin{teo}
A centered Brownian bridge $\{\hat{x}_t\}_{t\in[0,1]}$ given in \eqref{eq:def_sBB}
belongs to $\H$,
with covariogram function $\tilde{C}(\delta) = \frac{(|\delta|-1/2)^2}{2} + \frac{1}{24}$.
It may be represented as
\[
\hat{x}_t = \frac{1}{\sqrt{12}}
Y_0' +\d\sum_{k=1}^\infty \frac{1}{2k \pi}(Y_k\sin(2k\pi t) + Y'_k\cos(2k\pi t)).
\]
\end{teo}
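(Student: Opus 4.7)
The approach is to verify first that $\{\hat{x}_t\}$ satisfies the defining conditions of $\H$, then compute its covariogram explicitly, and finally extract the canonical series from Theorem~\ref{teo:2.1}. The elementary structural checks come first: since the Brownian bridge is centered, conditioning on $U$ gives $E[\hat{x}_t]=0$; and periodicity $\hat{x}_0=\hat{x}_1$ a.s.\ follows because $\fracPart{-U}=\fracPart{1-U}=1-U$ for $U\in(0,1)$.

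The main step is the covariance computation. Using independence of $U$ and $\{x_t\}$ together with the bridge covariance $E[x_vx_w]=v\wedge w-vw$,
\[
E[\hat{x}_s\hat{x}_t]=\int_0^1\bigl(\fracPart{s-u}\wedge\fracPart{t-u}-\fracPart{s-u}\fracPart{t-u}\bigr)\,du.
\]
Exploiting $1$-periodicity of the integrand to change variables to $\tau=\fracPart{s-u}$ and writing $\delta=s-t$, this reduces to $\int_0^1(\tau\wedge\fracPart{\tau-\delta}-\tau\fracPart{\tau-\delta})\,d\tau$, which depends on $s,t$ only through $\delta$. Assuming $\delta\in[0,1/2]$ (the other ranges follow by evenness and $1$-periodicity) and splitting at $\tau=\delta$ yields two polynomial integrals whose sum simplifies to $\frac{(|\delta|-1/2)^2}{2}+\frac{1}{24}$. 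This establishes stationarity and identifies the claimed continuous covariogram $\tilde{C}$.

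With $\tilde{C}$ in hand, Theorem~\ref{teo:2.1} delivers the canonical coefficients via $c_n^2=\int_0^1\tilde{C}(s)\cos(2n\pi s)\,ds$. A direct integration gives $c_0^2=1/12$. For $n\ge 1$ the shift $u=s-1/2$ together with $\cos(2n\pi(u+1/2))=(-1)^n\cos(2n\pi u)$ turns the integral into $\tfrac{(-1)^n}{2}\int_{-1/2}^{1/2}u^2\cos(2n\pi u)\,du$, and two integrations by parts give $c_n^2=1/(4n^2\pi^2)$. Substituting $c_0=1/\sqrt{12}$ and $c_n=1/(2n\pi)$ into the canonical form of Theorem~\ref{teo:2.1} produces the displayed expansion.

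The most tedious step is expected to be the region analysis behind the covariance integral: one must track which of $s,t,u$ is smallest so as to resolve both fractional parts correctly and confirm that the answer depends only on $\fracPart{\delta}$ and is even in $\delta$. A pleasant shortcut, once the shape of $\tilde{C}$ is suspected, is to note that $\tilde{C}(\delta)-\tfrac{1}{12}=\tfrac12 B_2(\fracPart{\delta})$, with $B_2$ the second Bernoulli polynomial, whose classical Fourier series $B_2(x)=\sum_{n\ge 1}\cos(2\pi n x)/(\pi^2 n^2)$ delivers every $c_n^2$ simultaneously and bypasses the integration by parts.
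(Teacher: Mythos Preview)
Your computation of the covariogram and of the cosine coefficients $c_n^2$ follows the same outline as the paper (which records both as routine), and the Bernoulli-polynomial remark is a pleasant shortcut. However, you have skipped precisely the part of the argument that the paper treats as the main work. You invoke Theorem~\ref{teo:2.1}, which applies only to processes already in $\H$; membership in $\H$ requires the process to be \emph{Gaussian}, and nowhere do you verify this. Checking zero mean, periodicity, stationarity and continuity of $\tilde{C}$ does not suffice.

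This is not a technicality to be waved away: for each fixed $t$, $\hat{x}_t = x_{\fracPart{t-U}}$ is, conditionally on $U$, a centered Gaussian whose variance depends on $U$, so unconditionally it is a \emph{scale mixture} of Gaussians, and such mixtures are typically not Gaussian. Accordingly, the paper does not appeal to Theorem~\ref{teo:2.1}. It uses the general Karhunen--Lo\`eve theorem (Theorem~\ref{teo:KL}, which only needs a continuous covariance) to get orthogonal coefficients $Z_k = Y_k/(2\pi k)$, and then spends most of the proof showing that each $Z_k$ is Gaussian. It does so by conditioning on $U=s$, expanding the bridge via its own sine series $x_t=\sqrt{2}\sum_n W_n\sin(n\pi t)/(n\pi)$, computing the conditional characteristic function $\Phi_{s,Z_k}$, and then integrating out $s$; the key point is the identity
\[
\frac{4}{\pi^4}\sum_{\substack{n\ge1\\ n\neq 2k}}\Bigl(\frac{(-1)^n-1}{4k^2-n^2}\Bigr)^{2}=\frac{1}{4k^2\pi^2},
\]
which makes $\Phi_{s,Z_k}$ independent of $s$ and hence equal to $\Phi_{Z_k}$. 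This step is the substantive content of the paper's proof, and it is absent from your proposal.
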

\begin{proof}
By conditioning on $U$, it is simple to prove that $E(\hat{x}_t)=0$ and
$E(\hat{x}_s\hat{x}_t) = \frac{(|t-s|-1/2)^2}{2} + \frac{1}{24} = \tilde{C}(|t-s|) $.
A straightforward calculation gives
\[
c^2_0 = \int_0^1 \tilde{C}(s) ds = \frac{1}{12}, 
\qquad  c^2_n = \int_0^1 \tilde{C}(s)\cos(2n\pi s) ds = \frac {1}{(2\pi n)^2}, n\geq 1,
\]
and hence the Karhunen-Loève theorem gives
\[
\hat{x}_t = \frac{1}{\sqrt{12}}
Y_0' +\sum_{k=1}^\infty \frac{1}{2k \pi}(Y_k\d\sin(2k\pi t) + Y'_k\d\cos(2k\pi t)).
\]
What remains to prove is that all the $(Y_k,Y'_k)$s are Gaussian. To sketch the proof for
$Y_k$ (the same arguments apply to $Y'_k$),  
we first recall that, conditioned on $U=s$,
\begin{align*}
Z_k := \frac{Y_k}{2\pi k} &= \int_0^1 \hat{x}_t \d\sin(2 k\pi t) dt =
\int_0^1  \d\sum_{n=1}^\infty W_n \frac{\sin(n \pi \fracPart{t-s})}{n \pi} 
\d\sin(2 k\pi t) dt  \\
& = \cos(2k\pi s) \frac{W_{2k}}{2k\pi} + \frac{2 \sin(2k\pi s) }{\pi^2}  
\sum_{n=1,n\neq 2k}^\infty W_n
\frac{(-1)^n-1}{4k^2-n^2}.
\end{align*}
Therefore, the characteristic function $\Phi_{s,Z_k}(t)$ of $Z_k$ conditioned on $U=s$ is
\[
\Phi_{s,Z_k}(t) = e^{-\frac{t^2}{2} \frac{1}{4k^2\pi^2}\cos^2(2k\pi s) }
e^{-\frac{t^2}{2} \frac{4 \sin^2(2k\pi s) }{\pi^4}  
\sum_{n=1,n\neq 2k}^\infty 
\big(\frac{(-1)^n-1}{4k^2-n^2}\big)^2}
\]
and hence the characteristic function $\Phi_{Z_k}(t)$ of $Z_k$ is
\[
\Phi_{Z_k}(t) = \int_0^1 
e^{-\frac{t^2}{2} \frac{1}{4k^2\pi^2}\cos^2(2k\pi s) }
e^{-\frac{t^2}{2} \frac{4 \sin^2(2k\pi s) }{\pi^4}  
\sum_{n=1,n\neq 2k}^\infty 
\big(\frac{(-1)^n-1}{4k^2-n^2}\big)^2}
ds.
\]
Now, since
\[
\frac{4 }{\pi^4}  
\sum_{n=1,n\neq 2k}^\infty 
\Big(\frac{(-1)^n-1}{4k^2-n^2}\Big)^2
=
\frac{1}{k^2\pi^4}\sum_{m=0}^\infty \Big(
\frac{1}{2k+2m+1}+\frac{1}{2k-2m-1}\Big)^2
= \frac{1}{4k^2\pi^2} 
\]
we get
\(
\Phi_{Z_k}(t) = 
e^{-\frac{t^2}{2} \frac{1}{4k^2\pi^2}}
\), which concludes the proof.
\end{proof}

The word ``centered'' in the definition of $\{\hat{x}_t\}_{t\in[0,1]}$ is clearly related
to the randomization of the starting point of the underlying Brownian bridge. 

In fact, we can say more: $\{\hat{x}_t\}_{t\in[0,1]}$ is strongly related to the
$y$-centralization of the Brownian bridge, i.e. to the following process
\begin{equation}\label{eq:def_CCBB}
\check{x}_t = x_t - \int_0^1 x_t \,dt,
\end{equation}
where  $\{x_t\}_{t\in[0,1]}$ is a Brownian bridge.
This last process stays in $\H$,  
and his sup is the limiting distribution of an optimal 
test statistic for the uniformity of the distribution on a circle, see \cite{watson,Darling}.
\begin{cor}
If $\{\hat{x}_t\}_{t\in[0,1]}$ is a centered Brownian Bridge, then it holds
$$
\hat{x}_t = \check{x}_t + Z
$$
where $Z$ is an independent random variable with null expectation and variance $ \frac{1}{12} $, and $\{ \check{x}_t\}_{t\in[0,1]} $ is defined as in \eqref{eq:def_CCBB}. Furthermore
\begin{equation}\label{proc_start}
\check{x}_t = \d\sum_{k=1}^\infty \frac{1}{2k \pi}(Y_k\sin(2k\pi t) + Y'_k\cos(2k\pi t)).
\end{equation}
\end{cor}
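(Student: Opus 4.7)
The plan is to read the decomposition off the Karhunen--Lo\`eve expansion of $\hat{x}_t$ already obtained in the previous theorem. That expansion consists of the constant term $Y'_0/\sqrt{12}$ plus a trigonometric tail with coefficients $(2\pi k)^{-1}$; I will show that the tail is distributed as $\check{x}$ and that the constant plays the role of $Z$.

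The first step is to verify that $\{\check{x}_t\}_{t\in[0,1]}\in\H$. Zero mean, Gaussianity, continuity, and periodicity (using $x_0=x_1=0$) are immediate from the definition $\check{x}_t = x_t - \int_0^1 x_s\,ds$. The substantive point is stationarity, which reduces to a direct covariance computation. Starting from $E[x_sx_t]=s\wedge t-st$ and the elementary integrals $E[x_s\int_0^1 x_u\,du] = s(1-s)/2$ and $E[(\int_0^1 x_u\,du)^2] = 1/12$, expanding the product and collecting terms yields $E[\check{x}_s\check{x}_t] = \tilde{C}(|t-s|) - 1/12$, where $\tilde{C}$ is the covariogram of $\hat{x}$ from the preceding theorem.

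Once $\check{x}\in\H$ is in place, I would apply Theorem~\ref{teo:2.1} to it. Shifting the covariogram by a constant only affects the zeroth cosine coefficient, so $c_0^2 = \int_0^1 \tilde{C}(s)\,ds - 1/12 = 0$ while $c_n^2 = (2\pi n)^{-2}$ for $n\ge 1$, exactly the non-constant coefficients already computed for $\hat{x}$. Substitution into Theorem~\ref{teo:2.1} gives the representation~\eqref{proc_start} at once. For the decomposition $\hat{x}_t = \check{x}_t + Z$, interpreted as equality in distribution of processes, let $Z$ be a centered Gaussian of variance $1/12$ independent of $\check{x}$. Then $\check{x}_t + Z$ is centered Gaussian with covariance $(\tilde{C}(|t-s|)-1/12) + 1/12 = \tilde{C}(|t-s|)$, matching that of $\hat{x}$; since a centered Gaussian process is determined by its second-order structure, the two processes share the same law. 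Equivalently, in the representative space $\H_Z$ of Corollary~\ref{cor:isometry}, identifying $Z$ with $Y'_0/\sqrt{12}$ and the trigonometric tail of the Karhunen--Lo\`eve expansion of $\hat{x}$ with $\check{x}$ exhibits the splitting directly, independence of $Z$ from the tail being built into the final clause of Theorem~\ref{teo:KL}.

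The only real obstacle is the covariance computation $E[\check{x}_s\check{x}_t] = \tilde{C}(|t-s|) - 1/12$, which is mechanical but must be carried out carefully to see the joint dependence on $(s,t)$ collapse to a function of $|t-s|$; once this identity is in hand, both claims follow by invoking Theorem~\ref{teo:2.1} and the uniqueness of the law of a centered Gaussian process.
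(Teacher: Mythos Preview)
Your proposal is correct and follows essentially the same route as the paper: identify the covariogram of $\check{x}$, observe that adding an independent mean-zero variance-$1/12$ variable recovers the covariogram of $\hat{x}$, and conclude by the uniqueness of the law of a centered Gaussian process (which is what the paper packages as an appeal to Corollary~\ref{cor:isometry}). The only cosmetic difference is that the paper quotes the covariogram $\tilde{C}(\delta)=\tfrac{(|\delta|-1/2)^2}{2}-\tfrac{1}{24}$ of $\check{x}$ from \cite{Darling}, whereas you compute it directly from $E[x_sx_t]=s\wedge t-st$; your computation is correct and indeed yields $\tilde{C}_{\hat{x}}(|t-s|)-\tfrac{1}{12}$.
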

\begin{proof}
The covariogram function of $\{\check{x}_t \}_{t\in[0,1]}$ is
$\tilde{C}(\delta) = \frac{(|\delta|-1/2)^2}{2} - \frac{1}{24}$ (see \cite{Darling}).
It is sufficient to calculate the covariance function of $ \check{x}_t + Z $
and to use the Corollary~\ref{cor:isometry} to complete the proof.
\end{proof}

\subsection{A computational parametric model for smoothing}

\begin{figure}[!h]
\begin{center}
\includegraphics[width=13cm, height=10cm]{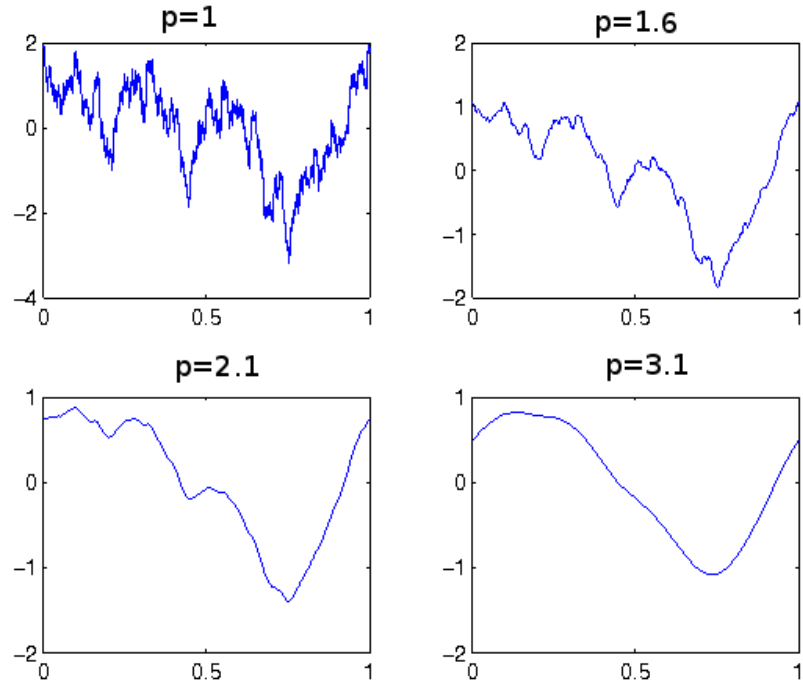}
\caption{A comparison between trajectories of process $x_t$ referred to same event and varying the value of $p$ in $\{1,1.6,2.1,3.1\}$}\label{fig:1}
\end{center}
\end{figure}

Results provided in this paper allows to create a Gaussian parametric family of stationary
and periodic processes of arbitrary
regularity. In fact, let us consider the following family of processes in $\H$ that extends \eqref{proc_start}
\begin{equation}\label{model}
x_t = \sum_{k=1}^{\infty}\frac{a}{k^{p}}(Y_k\sin(2k\pi t) + Y'_k\cos(2k\pi t)).
\end{equation} 
Theorem~\ref{teo:2.4} states that
the paths become more regular as $p$ increases. 
This property is shown in Figure~\ref{fig:1}, which 
suggests how to smooth a process by changing $p$. 

Summing up, model \eqref{model} gives a family of Gaussian processes whose 
trajectories are arbitrarily regular. 
In application, maximum likelihood estimates of $a$ and $p$ is a straightforward consequence of a FFT of the
observed discretized process $\{x_t\}_{t\in[0,1]}$.


\end{document}